\documentclass[12pt]{article}
\usepackage{graphicx}
\usepackage{amsmath}
\usepackage{amsfonts}
\usepackage{amsthm}
\usepackage{amssymb}
\usepackage[T1]{fontenc}
\usepackage{url}
\usepackage{color}
\usepackage[margin=1in]{geometry}
\usepackage{tikz}
\numberwithin{equation}{section}
\usepackage{amssymb,bm}
\usepackage{amsmath}
\usepackage{amsthm}
\usepackage{graphicx}
\usepackage[active]{srcltx} 
\usepackage{hyperref}
\usepackage{ulem}
\hypersetup{pdfborder=0 0 0}
\usepackage{tikz}
\usepackage{tkz-fct} 
\usepackage{caption}
\usepackage{subcaption}

\newtheorem{theorem}{{\sc Theorem}}[section]

\newtheorem{lemma}[theorem]{{\sc Lemma}}

\newtheorem{remark}[theorem]{Remark}

\newtheorem{definition}[theorem]{Definition}

\def\XXint#1#2#3{{\setbox0=\hbox{$#1{#2#3}{\int}$ }
\vcenter{\hbox{$#2#3$ }}\kern-.6\wd0}}

\newcommand{\Gl}{\lambda}

\newcommand{\GO}{\Omega}

\bmdefine\BGa{\alpha}
\bmdefine\BGb{\beta}
\bmdefine\BGd{\delta}
\bmdefine\BGe{\epsilon}
\bmdefine\BGve{\varepsilon}
\bmdefine\BGf{\phi}
\bmdefine\BGvf{\varphi}
\bmdefine\BGg{\gamma}
\bmdefine\BGc{\chi}
\bmdefine\BGi{\iota}
\bmdefine\BGk{\kappa}
\bmdefine\BGl{\lambda}
\bmdefine\BGn{\eta}
\bmdefine\BGm{\mu}
\bmdefine\BGv{\nu}
\bmdefine\BGp{\pi}
\bmdefine\BGth{\theta}
\bmdefine\BGvth{\vartheta}
\bmdefine\BGr{\rho}
\bmdefine\BGvr{\varrho}
\bmdefine\BGs{\sigma}
\bmdefine\BGvs{\varsigma}
\bmdefine\BGt{\tau}
\bmdefine\BGj{\tau}
\bmdefine\BGu{\upsilon}
\bmdefine\BGo{\omega}
\bmdefine\BGx{\xi}
\bmdefine\BGy{\psi}
\bmdefine\BGz{\zeta}
\bmdefine\BGD{\Delta}
\bmdefine\BGF{\Phi}
\bmdefine\BGG{\Gamma}
\bmdefine\BGL{\Lambda}
\bmdefine\BGP{\Pi}
\bmdefine\BGT{\Theta}
\bmdefine\BGS{\Sigma}
\bmdefine\BGU{\Upsilon}
\bmdefine\BGO{\Omega}
\bmdefine\BGX{\Xi}
\bmdefine\BGY{\Psi}

\bmdefine\BCA{{\mathcal A}}
\bmdefine\BCB{{\mathcal B}}
\bmdefine\BCC{{\mathcal C}}
\bmdefine\BCD{{\mathcal D}}
\bmdefine\BCE{{\mathcal E}}
\bmdefine\BCF{{\mathcal F}}
\bmdefine\BCG{{\mathcal G}}
\bmdefine\BCH{{\mathcal H}}
\bmdefine\BCI{{\mathcal I}}
\bmdefine\BCJ{{\mathcal J}}
\bmdefine\BCK{{\mathcal K}}
\bmdefine\BCL{{\mathcal L}}
\bmdefine\BCM{{\mathcal M}}
\bmdefine\BCN{{\mathcal N}}
\bmdefine\BCO{{\mathcal O}}
\bmdefine\BCP{{\mathcal P}}
\bmdefine\BCQ{{\mathcal Q}}
\bmdefine\BCR{{\mathcal R}}
\bmdefine\BCS{{\mathcal S}}
\bmdefine\BCT{{\mathcal T}}
\bmdefine\BCU{{\mathcal U}}
\bmdefine\BCV{{\mathcal V}}
\bmdefine\BCW{{\mathcal W}}
\bmdefine\BCX{{\mathcal X}}
\bmdefine\BCY{{\mathcal Y}}
\bmdefine\BCZ{{\mathcal Z}}

\bmdefine\Bzr{ 0}
\bmdefine\Ba{ a}
\bmdefine\Bb{ b}
\bmdefine\Bc{ c}
\bmdefine\Bd{ d}
\bmdefine\Be{ e}
\bmdefine\Bf{ f}
\bmdefine\Bg{ g}
\bmdefine\Bh{ h}
\bmdefine\Bi{ i}
\bmdefine\Bj{ j}
\bmdefine\Bk{ k}
\bmdefine\Bl{ l}
\bmdefine\Bm{ m}
\bmdefine\Bn{ n}
\bmdefine\Bo{ o}
\bmdefine\Bp{ p}
\bmdefine\Bq{ q}
\bmdefine\Br{ r}
\bmdefine\Bs{ s}
\bmdefine\Bt{ t}
\bmdefine\Bu{ u}
\bmdefine\Bv{ v}
\bmdefine\Bw{ w}
\bmdefine\Bx{ x}
\bmdefine\By{ y}
\bmdefine\Bz{ z}
\bmdefine\BA{ A}
\bmdefine\BB{ B}
\bmdefine\BC{ C}
\bmdefine\BD{ D}
\bmdefine\BE{ E}
\bmdefine\BF{ F}
\bmdefine\BG{ G}
\bmdefine\BH{ H}
\bmdefine\BI{ I}
\bmdefine\BJ{ J}
\bmdefine\BK{ K}
\bmdefine\BL{ L}
\bmdefine\BM{ M}
\bmdefine\BN{ N}
\bmdefine\BO{ O}
\bmdefine\BP{ P}
\bmdefine\BQ{ Q}
\bmdefine\BR{ R}
\bmdefine\BS{ S}
\bmdefine\BT{ T}
\bmdefine\BU{ U}
\bmdefine\BV{ V}
\bmdefine\BW{ W}
\bmdefine\BX{ X}
\bmdefine\BY{ Y}
\bmdefine\BZ{ Z}

\newtheorem*{definition*}{Definition}

\begin{document}

\title{Korn and Poincar\'e-Korn inequalities for thin domains in $GSBD^p$}

\author{Davit Harutyunyan\thanks{University of California Santa Barbara, harutyunyan@math.ucsb.edu}}
\maketitle

\begin{abstract}

We prove Korn's first inequalities with and without boundary conditions in $GSBD^p$ ($1<p<\infty$) in $C^{1,1}$-regular thin domains with Lipschitz thickness boundaries with a constant $C h^{-p},$ that is optimal if the domain mid-surface contains a flat region, e.g., if the domain contains a piece of plate. For the one with boundary condition, we impose zero Dirichlet boundary conditions on the thin lateral faces of the shell, where we prove that the same estimate holds with no subtracted rigid motion like in the classical case for Sobolev vector fields. 

\end{abstract}

\section{Introduction}
\label{Sec:1}

The classical Korn's inequality Introduced by Korn [\ref{bib:Korn.1},\ref{bib:Korn.2}],  bounds the gradient of a displacement field $u$ by its symmetrized gradient $e(u) = \frac12(\nabla u + (\nabla u)^{T})$, after a constant skew-symmetric matrix is subtracted. It asserts the following: \textit{Let $1<p<\infty.$ For any bounded connected open Lipschitz domain $\GO \subset \mathbb{R}^n$, there is a constant $C(\GO)>0$ such that for any 
$u \in W^{1,p}(\GO,\mathbb{R}^n)$ one has}
\begin{equation}
\label{1.1}
\inf_{\BA \in \mathbb{R}^{n\times n}_{\mathrm{skew}}}
\int_\Omega |\nabla u - \BA|^p dx \le C(\GO)\int_\Omega |e(u)|^p dx,
\end{equation}
The inequality proves coercivity of the elastic energy in linear elasticity. When the domain is thin, e.g., a plate or a shell of thickness $h,$ then the optimal constant $K(\Omega)$ in (\ref{1.1}) blows up as $h\to 0,$ e.g., [\ref{bib:Fri.Jam.Mue.1},\ref{bib:Fri.Jam.Mor.Mue.},\ref{bib:Harutyunyan.1},\ref{bib:Harutyunyan.2}], and the sharp blow--up rate encodes the mechanical rigidity (as well as mathematical rigidity), i.e., the relative softness of bending against stretching [\ref{bib:Fri.Jam.Mue.1},\ref{bib:Fri.Jam.Mor.Mue.},\ref{bib:Harutyunyan.1},\ref{bib:Gra.Har.1},\ref{bib:Gra.Har.2},\ref{bib:Harutyunyan.2}]. While the asymptotics of the constant $C$ in $h$ in the nonlinear counterpart--the geometric rigidity estimate is central in dimension reduction theories of plates and shells [\ref{bib:Fri.Jam.Mue.1},\ref{bib:Fri.Jam.Mor.Mue.}], the asymptotics of the optimal constant in the Korn's first inequality (\ref{1.1}) is crucial in the theory of buckling of slender structures  [\ref{bib:Gra.Har.1},\ref{bib:Gra.Har.2},\ref{bib:Harutyunyan.2},\ref{bib:Har.Rod.1}]. For plates the optimal  constant in (\ref{1.1}) scales as $h^{-p}$ attained by Kirchhoff bending Ans\"atze, and for most regular enough shells, the existing theory suggest that the scaling is the same $h^{-p},$ see [\ref{bib:Harutyunyan.2}]. If one imposes zero Dirichlet boundary conditions on Sobolev vector fields on thin face of the shell boundary, then inequality (\ref{1.1}) holds true with the same scaling $h^{-p}$ and with $\BA=0,$ i.e., with no skew-symmetric matrix subtraction. This is due to the fact, that no infinitesimal rigid motion $\BA x+b$ can vanish on a positive $\mathcal{H}^2$-Hausdorff measure set.

The target of the present manuscript is to study Korn's first inequality (\ref{1.1}) for shells in the context of Griffiths model for fracture, i.e., for the so called generalized special functions of bounded deformation, that allow for jumps unlike Sobolev functions. In the companion paper [\ref{bib:Harutyunyan.3}], we use the obtained estimates to analyze the Griffith energy on shells in the context of deformation versus fracture, where we prove various $\Gamma-$convergence results and identify the shell toughness regimes where fracture happens before bending and vice versa. We briefly review these spaces and explain why they, rather than $W^{1,p}$, are the natural setting. In the linearized theory, a deformation with fractures cannot be modeled by a Sobolev field on the entire domain, as it jumps across the crack. The classical replacement is the space $BD(\GO)$ of functions of bounded deformation introduced by Matthies, Strang, and Christiansen  in [\ref{bib:Mat.Str.Chr.}] and by Suquet in [\ref{bib:Suquet.1}], see also [\ref{bib:Tem.Str.},\ref{bib:Suquet.2}]. The space consists of those functions $u \in L^1(\GO;\mathbb{R}^n)$ whose symmetrized distributional derivative $Eu = \frac12\big(Du + (Du)^{T}\big)$ is a bounded Radon measure. It is essential here that only the symmetric part of $Du$ is required to be a measure, and $BD$ is strictly larger than $BV$. For $u \in BD(\GO)$ the jump set, denoted by $J_u,$ is countably $(\mathcal{H}^{n-1}, n-1)$-rectifiable [\ref{bib:Amb.Cos.DM.}], and $Eu$ decomposes as
\begin{equation}
\label{1.2}
Eu:= e(u)\,\mathcal{L}^n + [u]\odot\nu_u \,\mathcal{H}^{n-1}\llcorner J_u + E^{c}u ,
\end{equation}
where $e(u)$ is the density of the absolutely continuous part, $[u]$ is the jump of $u$ across $J_u$ with normal $\nu_u$, the operation $\odot$ is he symmetrized tensor product, and $E^{c}u$ is the Cantor part, see [\ref{bib:Amb.Cos.DM.}] for the details. This decomposition is exactly the one demanded by the mechanics: $e(u)$ is the elastic strain and $J_u$ is the crack. Discarding the diffuse Cantor part gives the subspace $SBD(\GO)$ of special functions of bounded deformation, introduced in analogy with the $SBV$ theory of De Giorgi and Ambrosio [\ref{bib:Amb.DeG.}] and studied in [\ref{bib:Amb.Cos.DM.},\ref{bib:Bel.Cos.DM.}]. One sets
\begin{equation}
\label{1.3}
SBD^p(\GO) := \{u \in SBD(\GO) \ :  \ e(u)\in L^p(\GO),\ \mathcal{H}^{n-1}(J_u)<\infty\}.
\end{equation}
The space $SBD^p$ is still not quite the right one for the Griffith energy. Finiteness of that energy bounds $e(u)$ in $L^p$ and the area of the crack, but it says nothing about the size of the displacement. For the energy minimization problem, it is difficult to obtain a priori $L^\infty$ bounds on  a minimizing sequence. Dal Maso [\ref{bib:DalMaso}] resolved this by defining, through one-dimensional slicing, the space $GBD$ of generalized functions of bounded deformation, roughly speaking, requiring of a measurable $u$ that its directional truncations have bounded directional variation in every direction, and its subspace $GSBD$ in which the slices are in $SBV$. Every 
$u \in GSBD(\GO)$ still possesses an approximate symmetrized gradient $e(u)$ and a countably $(\mathcal{H}^{n-1}, n-1)$-rectifiable jump set $J_u$, both agreeing with the previous notions when $u \in SBD(\GO)$, and one sets
\begin{equation}
\label{1.4}
GSBD^p(\GO) := \{u \in GSBD(\GO): e(u)\in L^p(\GO),\ \mathcal{H}^{n-1}(J_u)<\infty\}.
\end{equation}
These spaces are the appropriate ones for the Griffith energy, accounting for deformation and fracture of a body. Griffith's original criterion [\ref{bib:Griffith}] postulates that a crack advances when the elastic energy released by its growth exceeds the energy needed to create the new surface, the latter being proportional to the created area through a material constant, the toughness. Francfort and Marigo [\ref{bib:Fra.Mar.}] (see also [\ref{bib:Bou.Fra.Mar.}]) recast this criterion as a global minimization principle in which the displacement and the crack compete in a single functional. For a bounded open set $\GO\subset\mathbb{R}^3$ (body) with toughness $\beta>0$, a displacement $u$ and a closed crack $\Gamma\subset \GO$, the total energy is given by 
\begin{equation}
\label{1.5}
\mathcal{F}(u,\Gamma):= \int_{\GO\setminus\Gamma}W\big(x,e(u)\big)  dx + \beta \mathcal{H}^{2}(\Gamma),
\end{equation}
where $W$ is a convex Carath\'eodory density with $p-$growth in the gradient variable. The bulk term is the stored energy of the uncracked material, here in the linearized setting with $p$-growth ($p=2$ being the physical case), and the surface term penalizes the area of the crack. What distinguishes \eqref{1.5} from a classical elasticity problem is that $\Gamma$ is not prescribed: 
the geometry of the fracture is an unknown on the same footing as $u$, so that nucleation, branching and path selection are outcomes of minimization rather than inputs.

In minimizing \eqref{1.5} over the pairs $(u,\Gamma)$, one follows the $SBV$ theory of De Giorgi and Ambrosio, dropping $\Gamma$ as an independent variable and leting the crack be recorded by the displacement itself through its jump set, so that \eqref{1.5} becomes
the weak Griffith energy
\begin{equation}
\label{1.6}
\mathcal{F}(u): = \int_{\GO}W\big(x,e(u)\big)  dx + \beta\,\mathcal{H}^{2}(J_u), \qquad u \in SBD^p(U).
\end{equation}
The space $GSBD^p$ is the space in which the Griffith energy is coercive: compactness and lower semicontinuity hold there under the natural energy bounds [\ref{bib:Cha.Cri.1}], and the approximate gradient $\nabla u$, which appears on the left-hand side of every Korn inequality below, exists $\mathcal{L}^n$-a.e. for $u \in GSBD^p$ [\ref{bib:Cag.Cha.Sca.}]. The two terms of \eqref{1.6} are structurally mismatched. The bulk term sees only the symmetrized gradient and is blind to the addition of infinitesimal rigid motions, while the surface term carries no information on $\nabla u$ at all. A bound on $\mathcal{G}(u)$ therefore does not by itself automatically control the full gradient. Korn's inequality in $GSBD^p$ is exactly the component which provides that control. Korn's first inequality (\ref{1.1}) fails in $GSBD^p$ in its classical form, for every $p$ and every $n \ge 2$, no matter how small the jump set is. Some forms of preliminary zeroth order Korn-Poincar\'e inequalities were provided by Kohn and Temam and Strang in [\ref{bib:Tem.Str.},\ref{bib:Kohn}]. The appropriate for fracture mechanics formulation of stronger zeroth order Korn-Poincar\'e inequality was provided by Chambolle, Conti, and Francfort [\ref{bib:Cha.Con.Fra.}], where the inequality holds outside an exceptional set controlled by the jump set size in perimeter. For the recent developments of geometric rigidity and Korn's inequality in $GSBD$, we refer the reader to the papers [\ref{bib:Cha.Gia.Pen.},\ref{bib:Cha.Con.Fra.},\ref{bib:Friedrich.1},\ref{bib:Friedrich.2},\ref{bib:Con.Foc.Iur.2},\ref{bib:Cha.Con.Iur.},\ref{bib:Cag.Cha.Sca.}] and references therein. In this manuscript, we will be utilizing the most recent result in all dimensions and exponents by Cagnetti, Chambolle, and Scardia in [\ref{bib:Cag.Cha.Sca.}, Theorem 1.1]. Below, we spell out Theorems 1.1 in [\ref{bib:Cag.Cha.Sca.}] explicitly for the convenience of the reader.\\

\noindent \textbf{Theorem A} (Theorem~1.1 of [\ref{bib:Cag.Cha.Sca.}], Cagnetti-Chambolle-Scardia).
\textit{Let $n\in\mathbb{N}$ satisfy $n\ge 2$, let $p\in(1,\infty)$, and let $\GO\subset\mathbb{R}^n$ be a bounded, open and connected Lipschitz set. Then there exists $c=c(n,p,\GO)>0$ with the following property. For any $u\in GSBD^p(\GO)$ there exist a set of finite perimeter
$\omega\subset\GO$ with
\begin{equation}
\label{1.7}
\mathcal{H}^{n-1}(\partial^*\omega)\le c\,\mathcal{H}^{n-1}(J_u),
\end{equation}
and an infinitesimal rigid motion $a$, namely an affine function $a$ with $e(a)=0$, such that
\begin{equation}
\label{1.8}
\int_{\GO\setminus\omega}|\nabla u-\nabla a|^p dx \le c(n,p,\GO)\int_{\GO}|e(u)|^p dx .
\end{equation}
Moreover, there exists $c=c(n,p,q,\GO)>0$ such that
\begin{equation}
\label{1.9}
\|u-a\|_{L^q(\GO\setminus\omega)} \le c(n,p,q,\GO) \|e(u)\|_{L^p(\GO)},
\end{equation}
with $q\le p^*$ if $p<n$, $q<\infty$ if $p=n$, and $q\le\infty$ if
$p>n$.
}

\vspace{0.5cm}

Note, that by the isoperimetric inequality, (\ref{1.7}) also controls the volume of the exceptional set too, namely  
$\big(\mathcal{L}^n(\omega)\big)^{(n-1)/n} \le c \mathcal{H}^{n-1}(J_u)$.\\

\begin{remark}
\label{Rem:1.1}
It is proven in [\ref{bib:Con.Zwi.}, Theorem~5.10], that open, bounded, connected $(L, R)$-Lipschitz subsets of $\mathbb R^n$ (see Definition~1.2), support Korn's first inequality with a constant depending only on $L,R,p,$ and $n.$ As noted by the authors in [\ref{bib:Cag.Cha.Sca.}], in fact Theorem~A holds for open, bounded, Lipschitz subsets of $\mathbb R^n$ with a constant depending only on $N,r,L,$ that are defined in [\ref{bib:Cag.Cha.Sca.}, Remark~4.3], where $r$ is the size of a each chart of a standard uniform cover by cylinders of the boundary $\partial\GO$, and $N$ is the number of the cylinders. On the other hand the constant is invariant under scalings $\GO\to\Gl\GO$ by  [\ref{bib:Cag.Cha.Sca.}, Theorem~4.1], thus it does not depend on $r.$ A straightforward calculation will show that the Lipschitz domain in [\ref{bib:Cag.Cha.Sca.}, Remark~4.3] with parameters $(N,r,L)$ is an $(L_0, R_0)$-Lipschitz domain with $L_0=\max(1,L)$ and $N\leq (1+10R_0\sqrt{1+4L_0^2})^n.$ Hence, combining [\ref{bib:Cag.Cha.Sca.},\ref{bib:Con.Zwi.}], we infer, that the constant $c$ in Theorem A depends only on $L,R,p,$ and $n$ for open, bounded, connected $(L, R)$-Lipschitz subsets of $\mathbb R^n.$ This will be important in the theorems we will prove for thin domains. 
 \end{remark}

Rrecall the uniform Lipschitz framework introduced for Korn's inequality and the Geometric Rigidity Estimate by Conti and Zwicknagl in [\ref{bib:Con.Zwi.}]. The following is Definition 5.1 of [\ref{bib:Con.Zwi.}]. 

\begin{definition}[Uniformly (L,R)-Lipschitz domains]
\label{Def:1.2}
Let $L, R>0$. An open set $\Omega \subseteq \mathbb{R}^n$ is $(L, R)$-Lipschitz if there is $\varepsilon>0$ such that:
\begin{enumerate}
 \item $\mathrm{diam}(\Omega)<R\varepsilon.$ 
 \item For every $x \in \partial \Omega$ there is $f_x \in \operatorname{Lip}\left(\mathbb{R}^{n-1} ; \mathbb{R}\right)$ 
 with $\operatorname{Lip}\left(f_x\right) \leq L,$ and an isometry $\BA_x: \mathbb{R}^n \rightarrow \mathbb{R}^n$ 
 such that $B_\varepsilon(x) \cap \Omega=B_\varepsilon(x) \cap V_x$, where
$$
 V_x:=\BA_x\left\{\left(y^{\prime}, y_n\right) \in \mathbb{R}^{n-1} \times \mathbb{R}: y_n<f_x\left(y^{\prime}\right)\right\}
 $$
\end{enumerate}
\end{definition}

In the present manuscript, we prove the analogue of Theorem~A in thin domains (in particular shells) for $GSBD^p$ fields $u,$  
both with and without boundary conditions on the thin face of the shell boundary, under a natural assumption of smallness of the jump sets. The boundary conditions are zero boundary conditions imposed on the thin face of the boundary of the shell. The constant $C$ in (\ref{1.8}) has the same scaling $h^{-p}$ in both cases as in the classical situation mentioned above. Those estimates are proven via a thickness-scale patch-wise Sobolev regularization of $GSBD$ fields, where the constants are scale-free  and uniform, thanks to 
Theorem~A above. 

The Korn inequality without boundary conditions was proven for square plates in the PhD thesis of my former student Andre Martins Rodrigues reported in his dissertation (Theorem~5.5.1 in [\ref{bib:Rodrigues}]) in October 2023 (published online in March, 2024).
The present paper extends it to $C^{1,1}$ thin domains, for application to fracture of shells. The geometry and combinatorics of the covering of the domain and the chaining of the patches in the inequality without boundary conditions is similar to [\ref{bib:Rodrigues}], which requires extra caution and effort to work, due to the fact that the domain mid-surface is not an exact rectangle and is not flat, and one has to construct a suitable cover of the domain into uniformly $(L,R)$-Lipschitz patches. The inequality with boundary conditions requires some new input too. 

In a parallel development, in the companion paper [\ref{bib:Harutyunyan.3}], we do dimension reduction in shell deformation and fracture analysis  within the above framework of the variational theory of brittle fracture of Bourdain, Francfort and Marigo
[\ref{bib:Fra.Mar.},\ref{bib:Bou.Fra.Mar.}]. Dimension reduction for brittle thin plates has been done in [\ref{bib:Alm.Tas.1}].


\section{Setting and main results}
\setcounter{equation}{0}
\label{Sec:2}

This section is devoted to the problem setup and formulation of the main results on Korn's first inequalities in thin domains. Assume 
$S \subset \mathbb{R}^3$ is a $C^{1,1}$ compact, connected, embedded regular surface, either without or with boundary, that is a finite disjoint union of embedded $C^{1,1}$ curves, with unit normal field $n(x)\colon S\to \mathbb S^{2}.$ For $h>0$ small, let the family of Lipschitz functions (thickness functions) $g_1^h(x),g_2^h(x)\colon S\to (0,\infty)$ satisfy the uniform conditions
\begin{equation}
\label{2.1}
h\leq g_1^h(x),g_2^h(x)\leq c_1 h\quad \text{and}\quad |\nabla g_1^h(x)|+|\nabla g_2^h(x)|\leq c_2h
\quad\text{for all}\quad x\in S.
\end{equation}
The set 
\begin{equation}
\label{2.2}
\Omega_h=\Big\{x+tn(x) \ : \ x\in S,\ t\in \left(-g_1^h(x),g_2^h(x)\right) \Big\}
\end{equation}
is a thin domain of thickness of order $h$ with mid-surface $S.$ The thin domain $\Omega_h$ is also called a shell with variable thickness. In the particular case when $g_1^h(x)=g_2^h(x)\equiv 1,$ the set $\GO_h$ becomes a shell with thickness $h.$ Let 
$\kappa_0$ dente the maximum of the absolute values of principal curvatures of $S$ and $\tau_S = \mathrm{reach}(S)$
 denote the reach of $S.$ Apparently $\kappa_0$ and $\tau_S$ are positive and finite by $C^{1,1}$ regularity and compactness of 
 $S$ (Section~\ref{Sec:3}). The first main result is the following Korn's first inequality without boundary conditions. 

\begin{theorem}[Korn inequality for thin domains in $GSBD^p$]
\label{Thm:2.1}
Let $p \in (1,\infty)$ and let $S$ and $\GO_h$ be as in (\ref{2.1})-(\ref{2.2}). There exist constants $h_0, \delta_0, C> 0$, 
all depending only on $S$ and $p,$ such that for all $h \in (0,h_0)$ and all $u \in GSBD^p(\GO_h)$ with 
$\mathcal{H}^2(J_u) < \delta_0 h,$ there exist a set of finite perimeter $\omega \subseteq \GO_h$ with
$\mathcal{L}^3(\omega) \le C\,\mathcal{H}^2(J_u),$ a constant skew-symmetric matrix 
$\BA \in \mathbb{R}^{3\times3}_{\mathrm{skew}}$ and a vector $b \in \mathbb{R}^3,$ such that
\begin{equation}
\label{2.3}
 \|\nabla u - \BA\|_{L^p(\GO_h\setminus\omega)} \le \frac{C}{h}\|e(u)\|_{L^p(\GO_h)}.
\end{equation}
and 
\begin{equation}
\label{2.4}
\|u(x) - (\BA x + b)\big\|_{L^q(\GO_h\setminus\omega)} \le \frac{C}{h}\|e(u)\|_{L^p(\GO_h)},
\end{equation}
where $q\le p^*$ if $p<n$, $q<\infty$ if $p=n$, and $q\le\infty$ if $p>n$. The scaling $h^{-1}$ of the constant in both (\ref{2.3}) and (\ref{2.4}) is optimal as $h\to 0$ for all surfaces $S$ that contain a flat region (open part of a hyperplane in $\mathbb R^3$). 
\end{theorem}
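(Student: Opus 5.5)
The plan is to cover $\GO_h$ by thickness-scale patches, apply Theorem~A on each patch with a scale-free constant (Remark~\ref{Rem:1.1}), and chain the local rigid motions across the mid-surface $S$.

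\textbf{Covering.} Fix a maximal $h$-separated net $\{x_i\}\subset S$ with $N\sim h^{-2}$ points. For each $i$ let $P_i$ be the part of $\GO_h$ lying over the geodesic (or tangent-chart) disc $D_i\subset S$ of radius $Mh$, with $M$ a large fixed constant. Since $S$ is $C^{1,1}$ with reach $\tau_S$, and $g_1^h,g_2^h$ satisfy (\ref{2.1}), for $h<h_0$ each rescaled patch $h^{-1}P_i$ is a bounded connected $(L,R)$-Lipschitz domain with $L,R$ depending only on $S$, $c_1$, $c_2$. This covers the boundary patches too, using that $\partial S$ is $C^{1,1}$. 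By Remark~\ref{Rem:1.1}, Theorem~A then holds on every $P_i$ with one constant $c$. Applied to $u$, it gives sets $\omega_i$ with $\mathcal H^2(\partial^*\omega_i)\le c\,\mathcal H^2(J_u\cap P_i)$, affine rigid maps $a_i(x)=\BA_i x+b_i$, and
\[
\|\nabla u-\BA_i\|_{L^p(P_i\setminus\omega_i)}\le c\|e(u)\|_{L^p(P_i)}.
\]

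\textbf{Exceptional set.} Let $\omega=\bigcup_i\omega_i$. Each point lies in boundedly many patches. By the isoperimetric inequality, $\mathcal L^3(\omega_i)\le c\,\mathcal H^2(J_u\cap P_i)^{3/2}$. Using $\mathcal H^2(J_u)<\delta_0h$, this gives $\mathcal L^3(\omega_i)\le c\,\delta_0^{1/2}h^{1/2}\,\mathcal H^2(J_u\cap P_i)$. Summing over $i$ yields $\mathcal L^3(\omega)\le C\mathcal H^2(J_u)$.

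\textbf{Comparing neighbours.} The smallness $\delta_0$ also gives $\mathcal L^3(\omega_i\cup\omega_j)\le c\delta_0^{3/2}h^{3/2}\ll h^3\sim|P_i\cap P_j|$ for overlapping patches. So on $G_{ij}=(P_i\cap P_j)\setminus(\omega_i\cup\omega_j)$, which has measure $\gtrsim h^3$, both local estimates hold. An affine map that is small in $L^p$ on a set of measure $\gtrsim h^3$ inside a ball of radius $\sim h$ is controlled by a standard norm-equivalence on finite-dimensional spaces, applied to sets of proportional measure. This gives
\[
|\BA_i-\BA_j|\le Ch^{-3/p}\|e(u)\|_{L^p(P_i\cup P_j)}.
\]
For the zeroth order part, the bound (\ref{1.9}) after scaling gives $|b_i-b_j+(\BA_i-\BA_j)x_i|\lesssim h^{1-3/p}\|e(u)\|_{L^p}$.

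\textbf{Chaining (the main obstacle).} A naive path of $\sim h^{-1}$ patches between two points of $S$ would give a loss of order $h^{-1}$ in $|\BA_i-\BA_j|$ but with the wrong $L^p$ bookkeeping. Following the plate argument of [\ref{bib:Rodrigues}], I would instead build a multiscale (dyadic) hierarchy on $S$. Since $S$ is compact and $C^{1,1}$, finitely many charts make $S$ bi-Lipschitz to a finite union of squares. At scale $2^kh$, one applies Theorem~A to rescaled patches of diameter $2^kh$ and thickness $h$, which are no longer uniformly Lipschitz. For these I would instead use the classical thin-plate Korn-type bound: the skew matrices of children differ from the parent's by $\lesssim (2^k)\cdot$ local strain. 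Summing the geometric series in $k$, together with Hölder's inequality, gives
\[
\sum_i h^3|\BA_i-\BA|^p\le Ch^{-p}\|e(u)\|^p_{L^p(\GO_h)}
\]
for a single global $\BA$, and similarly for the translations.

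An alternative route for this step is simpler to write. Build the piecewise-affine field $\tilde u=\sum\varphi_i a_i$ from a partition of unity, which lies in $W^{1,p}(\GO_h)$ with $\|e(\tilde u)\|_{L^p}\lesssim\|e(u)\|_{L^p}$ by the neighbour bounds. Then apply the classical Sobolev Korn inequality on shells with constant $Ch^{-p}$ from [\ref{bib:Harutyunyan.2}] to $\tilde u$. The ingredient to verify is that this Sobolev inequality holds for the variable-thickness $C^{1,1}$ domains of (\ref{2.1})-(\ref{2.2}), including the Poincaré-type version (\ref{2.4}). I would adopt this route.

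Then (\ref{2.3}) follows from
\[
\|\nabla u-\BA\|_{L^p(P_i\setminus\omega)}\le\|\nabla u-\BA_i\|_{L^p(P_i\setminus\omega)}+\|\nabla\tilde u-\BA\|_{L^p(P_i)}+\|\nabla\tilde u-\BA_i\|_{L^p(P_i)}
\]
and summation over $i$, and (\ref{2.4}) is proved analogously.

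\textbf{Optimality.} If $S$ contains a flat disc, take a Kirchhoff bending field supported there: $u=(-x_3\partial_1\phi,\,-x_3\partial_2\phi,\,\phi(x_1,x_2))$ with $\phi$ a smooth bump. This field is Sobolev, so $J_u=\emptyset$ and the inequality reduces to the sharp classical one. Here $e(u)=O(h)$ while $\nabla u-\BA$ is of order $1$ for every $\BA$, which shows the scaling $h^{-1}$ cannot be improved.
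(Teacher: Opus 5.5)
\textbf{There is a genuine gap, at ``Comparing neighbours''.} You assert $\mathcal L^3(\omega_i\cup\omega_j)\le c\,\delta_0^{3/2}h^{3/2}\ll h^3\sim\mathcal L^3(P_i\cap P_j)$, but $h^{3/2}\gg h^3$ as $h\to0$.

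The hypothesis $\mathcal H^2(J_u)<\delta_0h$ is a global budget. It is far larger than the cross-section $\sim h^2$ of one patch, so a through-thickness crack of that size can completely sever on the order of $\delta_0/h$ patches. On those patches the set $\omega_i$ from Theorem~A may contain the whole overlap. The bound on $|\BA_i-\BA_j|$ is then unavailable, and so is $\|e(\tilde u)\|_{L^p}\lesssim\|e(u)\|_{L^p}$ for your partition-of-unity field.

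More seriously, $\omega=\bigcup_i\omega_i$ is too small for the theorem to hold. Take a geodesic disc $D\subset S$ of radius $r$, with $r$ a small multiple of $\delta_0$ and $Mh\ll r$. Let the crack be $\Phi(\partial D\times(-g_1^h,g_2^h))$, whose area is $\lesssim rh<\delta_0h$. Set $u=\BA'x$ over $D$ and $u=0$ elsewhere, with $\BA'\neq0$ skew. Then $e(u)=0$. For every patch $P_k$ at distance $\gg Mh$ from $\partial D$, all patches meeting $P_k$ are crack-free. Your bound $\mathcal L^3(\omega_i)\le c\,\mathcal H^2(J_u\cap P_i)^{3/2}$ then gives $\mathcal L^3(\omega\cap P_k)=0$. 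So (2.3) forces $\BA=\BA'$ deep inside $D$ and $\BA=0$ far outside it, a contradiction. Patchwise use of Theorem~A never excises a region that the crack cuts off. Such a region has volume $\sim r^2h$, which $\mathcal L^3(\omega)\le C\mathcal H^2(J_u)$ permits, but it must be put into $\omega$ by a separate, global mechanism.

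\textbf{The missing mechanism is the paper's good/bad classification.}
\begin{itemize}
\item A patch $P_{ij}$ is good if $\mathcal H^2(J_u\cap P_{ij})\le\lambda h^2$. Here $\lambda$ is chosen so that $\mathcal L^3(\omega_{ij})\le Ch\,\mathcal H^2(J_u\cap P_{ij})\le\frac{a_3}{4}h^3$, and only good neighbours are compared.
\item Bounded overlap gives $\#\mathcal B\lesssim\mathcal H^2(J_u)/(\lambda h^2)\lesssim\delta N$ on an $N\times N$ grid with $N\sim1/h$. So most rows and columns consist entirely of good patches.
\item The excised set is $\bigcup\omega_{ij}$ together with all patches lying in both a bad row and a bad column. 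Its volume is $\lesssim\frac{\mathcal H^2(J_u)}{\lambda h^2}\cdot\delta N\cdot h^3\lesssim\delta\,\mathcal H^2(J_u)$.
\item Every remaining patch is reached from an anchor by L- or Z-shaped chains of good patches. Averaging over the anchor row gives the factor $N^p\sim h^{-p}$.
\end{itemize}

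Your reduction to a Sobolev Korn inequality on shells via $\tilde u=\sum\varphi_ia_i$ could replace the path counting only after this structure is in place. Even then you would have to handle the excised region, for example by applying the Sobolev inequality with a uniform constant $Ch^{-1}$ on $\GO_h$ minus that region, whose shape is not uniformly controlled. You would also need the variable-thickness $C^{1,1}$ Sobolev result as an external input. The discrete chaining needs neither.

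\textbf{The Poincar\'e-type ingredient you plan to import is false for $q>p$.} Your own Kirchhoff field gives $\inf_{\BA,b}\|u-\BA x-b\|_{L^q}\gtrsim h^{1/q}$, while $h^{-1}\|e(u)\|_{L^p}\sim h^{1/p}$. So (2.4) with constant $C/h$ can only be obtained for $q\le p$. The paper's ``same lines'' argument for (2.4) passes over this as well.
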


For the Korn inequality with boundary conditions we assume $\partial S \ne \emptyset$.
Denote the lateral boundary of $\Omega_h$ as 
\begin{equation}
\label{2.5}
\partial_S\Omega_h=\Big\{x+tn(x) \ : \  x \in \partial S,  \ t\in \left(-g_1^h(x),g_2^h(x)\right)\Big\},
\end{equation}
and define the subspace
\begin{equation}
\label{2.6}
V_0 := \Big\{u \in GSBD^p(\Omega_h) \ : \ u
= 0\ \ \mathcal{H}^2\text{-a.e.\ on}  \ \ \partial_S\Omega_h \Big\},
\end{equation}
i.e., $u$ vanishes on the thin faces of the domain boundary in the sense of traces. Recall that for$u\in GSBD^p(\Omega_h)$ 
and a Lipschitz boundary, the trace is well defined $\mathcal{H}^2$-a.e., see [\ref{bib:DalMaso}].

\begin{theorem}[Korn inequality for thin domains in $GSBD^p$ with boundary conditions]
\label{Thm:2.2}
Assume $\partial S \ne \emptyset$. There exist $h_0, \delta_0, C > 0,$ all depending only on $S$ and $p,$ such that for all $h \in (0,h_0)$ and all $u \in V_0$ with $\mathcal{H}^2(J_u) < \delta_0 h,$ there is a set of finite perimeter 
$\omega \subseteq \Omega_h$ with $\mathcal{L}^3(\omega) \le C\,\mathcal{H}^2(J_u)$ such that
\begin{equation}
\label{2.7}
 \|\nabla u\|_{L^p(\GO_h\setminus\omega)}+\|u(x)\|_{L^q(\GO_h\setminus\omega)} \le \frac{C}{h}\|e(u)\|_{L^p(\GO_h)}, 
\end{equation}
where $q\le p^*$ if $p<n$, $q<\infty$ if $p=n$, and $q\le\infty$ if $p>n$. The scaling $h^{-1}$ of the constant in (\ref{2.7}) 
is optimal as $h\to 0$ for all surfaces $S$ that contain a flat region.
\end{theorem}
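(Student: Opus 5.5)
Our plan is to reduce Theorem~\ref{Thm:2.2} to Theorem~\ref{Thm:2.1} plus a lateral-boundary argument at the thickness scale.

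\textbf{Step 1: apply Theorem~\ref{Thm:2.1}.} Since $u\in V_0\subset GSBD^p(\Omega_h)$ and $\mathcal H^2(J_u)<\delta_0 h$, Theorem~\ref{Thm:2.1} provides $\omega_1$, a skew matrix $\BA$ and a vector $b$ with
\[
\|\nabla u-\BA\|_{L^p(\Omega_h\setminus\omega_1)}+\|u-(\BA x+b)\|_{L^q(\Omega_h\setminus\omega_1)}\le Ch^{-1}\|e(u)\|_{L^p(\Omega_h)}.
\]
It then suffices to show that the rigid motion $a(x)=\BA x+b$ is itself small:
\[
|\BA|\,|\Omega_h|^{1/p}+\|a\|_{L^q(\Omega_h)}\le Ch^{-1}\|e(u)\|_{L^p}.
\]
The estimate \eqref{2.7} then follows from the triangle inequality.

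\textbf{Step 2: local patches along $\partial S$.} Pick three (or more) points $x_1,x_2,x_3\in\partial S$ that are at mutual distance of order one and not collinear in a uniform sense. This is possible because $\partial S$ consists of $C^{1,1}$ curves. If $\partial S$ is a single short curve, use points along it at distance $\sim$ its length; a rigid motion vanishing near a curve segment that is not a straight line must be trivial. Around each $x_i$, take the half-patch
\[
P_i=\{y+tn(y):\ y\in S\cap B_{h}(x_i),\ t\in(-g_1^h,g_2^h)\}.
\]
After rescaling by $h$, $P_i$ is a uniformly $(L,R)$-Lipschitz domain of unit size, using \eqref{2.1} and $C^{1,1}$ regularity. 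Two facts are used here.

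First, the Korn--Poincaré inequality with trace for Sobolev-like fields in $GSBD^p$ holds on unit domains: apply Theorem~A and Remark~\ref{Rem:1.1} on $P_i$. This gives a rigid motion $a_i$ and an exceptional set $\omega^i$ with $\mathcal H^2(\partial^*\omega^i)\le c\,\mathcal H^2(J_u\cap P_i)\le c\delta_0 h$, which is a small fraction of $h^2$ after rescaling only if $\delta_0 h\ll h^2$.

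Second, and this is the main obstacle, the jump set may be concentrated near the lateral face, with $\mathcal H^2(J_u\cap P_i)$ comparable to $h^2$. To handle this I would average over many disjoint boundary patches. There are about $h^{-1}$ disjoint patches of size $h$ along a unit length of $\partial S$, while $\mathcal H^2(J_u)<\delta_0 h$. By pigeonholing, a proportion at least $1-\sqrt{\delta_0}$ of them carry jump area $\le\sqrt{\delta_0}h^2$. On such "good" patches, the rescaled jump set is small, so $\omega^i$ is a small fraction of $P_i$ and, via the trace, of the lateral face.

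On a good patch, the trace of $u$ vanishes on $\partial_S\Omega_h\cap P_i$, whose measure is $\sim h^2$. A trace estimate for $u-a_i$ on the part of the lateral face that is not cut off by $\omega^i$ then yields
\[
\|a_i\|_{L^p(\partial_S\cap P_i)}\lesssim h^{1-1/p}\,\|e(u)\|_{L^p(P_i)}\quad\text{(rescaled)}.
\]
This trace bound on $GSBD$ is delicate. I would derive it by slicing in the normal direction to $\partial S$, or by a Sobolev extension/approximation of $u$ outside $\omega^i$ (as in Chambolle--Conti--Francfort). Since a rigid motion is controlled on a ball by its values on a surface piece that is not degenerate, this gives $|a_i(x)|$ and $h|\nabla a_i|$ bounds on $P_i$.

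\textbf{Step 3: comparison with the global motion.} On a good patch $P_i$, we have $\mathcal L^3(P_i\setminus(\omega_1\cup\omega^i))\gtrsim h^3$, since $\mathcal L^3(\omega_1)\le C\delta_0h$ can be arranged to avoid most patches by the same pigeonholing. Comparing $a$ and $a_i$ on this set using the two Korn estimates gives
\[
\|a-a_i\|_{L^\infty(P_i)}\lesssim h^{-3/p}\bigl(\|u-a\|_{L^p(P_i\setminus\omega_1)}+\|u-a_i\|\bigr).
\]
This controls $a$ on three well-separated patches near $x_1,x_2,x_3$ with the bound $C h^{-1}\|e(u)\|$ times the appropriate volume factor. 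Since $a$ is affine, controlling its values at three non-collinear points at unit distance controls $b$ and $\BA$ up to a component. The remaining component is the rotation about the normal line that vanishes on the plane through the points; it is fixed by the smallness of $\nabla a$ on the patches, or by a fourth point if needed.

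Finally, set $\omega=\omega_1\cup\bigcup_i\omega^i$. Its volume is $\le C\mathcal H^2(J_u)$ by the isoperimetric inequality in each patch. Optimality of the scaling follows from Theorem~\ref{Thm:2.1}'s example: a Kirchhoff bending field on a flat piece, multiplied by a cutoff vanishing at $\partial S$.
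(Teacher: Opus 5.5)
Your Step~1 reduction matches the paper's: apply Theorem~\ref{Thm:2.1}, then show the rigid motion $\BA x+b$ is small. The way you bound $\BA x+b$ does not close, however, for two quantitative reasons.

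First, the exceptional set from Theorem~\ref{Thm:2.1} is only known to satisfy $\mathcal{L}^3(\omega_1)\le C\mathcal{H}^2(J_u)<C\delta_0 h$. The union of all thickness-scale boundary patches $P_i$ has volume of order $h^{-1}\cdot h^3=h^2\ll\delta_0 h$. So nothing prevents $\omega_1$ from swallowing every $P_i$. Your pigeonhole count fails here: it bounds the number of half-covered patches only by $\sim C\delta_0 h^{-2}$, which exceeds the total number $\sim h^{-1}$ of boundary patches.

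Second, take a patch with $\mathcal{L}^3(P_i\setminus(\omega_1\cup\omega^i))\gtrsim h^3$. The only information on $u-a$ there is the global bound $\|u-a\|_{L^p(\Omega_h\setminus\omega_1)}\le Ch^{-1}\|e(u)\|_{L^p(\Omega_h)}$, which may be concentrated entirely on $P_i$.
\begin{itemize}
\item Your comparison then gives $\sup_{P_i}|a|\lesssim h^{-1-3/p}\|e(u)\|_{L^p}$; the trace contribution $h^{1-3/p}\|e(u)\|_{L^p}$ is lower order.
\item Three points then give $|\BA|+|b|\lesssim h^{-1-3/p}\|e(u)\|_{L^p}$, so $\|\BA\|_{L^p(\Omega_h)}\simeq h^{1/p}|\BA|\lesssim h^{-1-2/p}\|e(u)\|_{L^p}$, not $h^{-1}$.
\item Summing over all good boundary patches improves this only to $\|a\|_{L^p}\lesssim h^{-1}\|e(u)\|_{L^p}$ on a strip of volume $\sim h^2$. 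Transferring an affine bound from that strip to $\Omega_h$ (volume $\sim h$) still costs $h^{-1/p}$.
\end{itemize}
A boundary layer of width $h$ is too thin to pin down the global rigid motion with the sharp constant.

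The missing idea is to make $u$ vanish on a set of volume comparable to $\mathcal{L}^3(\Omega_h)\sim h$. The paper does this as follows.
\begin{itemize}
\item Enlarge $S$ to $\widetilde S$ with a collar $\widetilde S\setminus S$ of width $r_1$ independent of $h$, and extend the thickness functions.
\item Set $\tilde u=u$ on $\Omega_h$ and $\tilde u=0$ on $\mathcal{C}_h$. Because the trace of $u$ vanishes on $\partial_S\Omega_h$, no jump is created, so $\tilde u\in GSBD^p(\widetilde\Omega_h)$ with the same $e(u)$ and jump set.
\item Apply Theorem~\ref{Thm:2.1} on $\widetilde\Omega_h$ to get $\widetilde\omega,\BA,b$. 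Since $\mathcal{L}^3(\mathcal{C}_h)\ge c_0h$ while $\mathcal{L}^3(\widetilde\omega)\le C\delta_0 h$, the set $\mathcal{C}_h\setminus\widetilde\omega$ has volume at least $c_0h/2$ for small $\delta_0$.
\item There $\nabla\tilde u=0$ and $\tilde u=0$, so $\tfrac12c_0h|\BA|^p\le Ch^{-p}\|e(u)\|^p_{L^p}$, i.e.\ $\|\BA\|_{L^p(\Omega_h)}\lesssim h^{-1}\|e(u)\|_{L^p}$. The same argument on $\mathcal{C}_h\setminus\widetilde\omega$ is then used for $b$.
\end{itemize}

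In this route the boundary condition is used only qualitatively (no new jump across the interface), so no quantitative trace inequality in $GSBD^p$ is needed. That also removes your other weak point: slicing along the normal to $\partial S$ controls only the normal component $u\cdot\nu$, not the full trace. A smaller point: there is no ``remaining component''. A nonzero $3\times3$ skew matrix has a one-dimensional kernel, so an infinitesimal rigid motion vanishing at three non-collinear points is zero.
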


\begin{remark}
\label{Rem:2.3}
A perimeter bound $\mathcal{H}^{n-1}(\partial^*\omega)\le C \mathcal{H}^{n-1}(J_u)$ is impossible for thin domains, 
and only a volume bound $\mathcal{L}^3(\omega) \le C\,\mathcal{H}^2(J_u)$ can be proven as in Theorems~2.1 and 2.2. 
This is due to the fact, that a crack of size $\delta h$ in a shell (for any fixed $\delta \in (0,\delta_0)$), can cut off a piece 
of shell of diameter of order $\delta,$ which has to be included in the exceptional set and which will have perimeter of 
order $\delta^2,$ thus can not be bounded in perimeter by  $ C\delta h$ for small $h.$ Hence, the volume bound 
$\mathcal{L}^3(\omega) \le C\,\mathcal{H}^2(J_u)$ is optimal in Theorems~2.1 and 2.2.
\end{remark}


\section{Proof of Korn's inequalities}
\setcounter{equation}{0}
\label{Sec:3}

\subsection{Proof of Theorem~\ref{Thm:2.1}}

The proof of Theorems~2.1 and 2.2 will make use of covering $S$ with uniform $h-$scale charts and lifting them to the shell. 
The lifted "curvilinear cylindrical" components will satisfy the uniform overlapping property as well as each of them will have a volume comparable to $h^3.$ The "cylinders" containing a small fixed portion of the crack will be called  \textit{good}, and the ones containing a larger portion of the crack will be called \textit{bad}. The strategy will be to apply the uniform Korn's first inequality in Theorem~A to the displacement $u$ in every good cylinder, and then connect them all in one skew-symmetric matrix $\BA.$ For this purpose, only a connected cover is not sufficient, and one needs a checkerboard-like chart of $S,$ because the technique requires a few paths for each connection. A similar connected but somewhat disorganized cover of the shell has been constructed in [Lemma~6.1, \ref{bib:Har.Rod.1}], which served the purpose of [\ref{bib:Har.Rod.1}], but will not do here, although it can be slightly modified to serve in the proof of Theorems~2.1 and 2.2 too. Because the geometry of the construction is not something complex, we will only formulate the construction lemma and sketch the proof ideas, which is Lemma~\ref{Lem:3.1} below. A vital property of the covers is that each patch of a chart has to support Korn's first inequality (\ref{1.8}) for $GSBD^p$ vector fields with a uniform constant depending only on $p$ and $S.$ Covers with uniforms $(L,R)-$Lipschitz patches will serve the purpose. We will assume for simplicity, that $S$ is a single patch of the finite atlas covering it. The atlas is finite by compactness. Then putting piecewise Korn's inequalities together on an atlas of connected patches (lifted cylinders) is routine and completely algebraic in nature, thus the procedure can be carried out for $GSBD^p$ fields too and we will skip it here. This is bearing in mind that we can assume, that we have a small enough portion of crack in each of the patches and, and the neighboring overlapping patches intersect at large enough volumes compared to the total crack size. The notation $\Phi(x,t)=x+tn(x)$ for $x\in S$ and $t\in \left(-g_1^h(x),g_2^h(x)\right)$ will be frequently used in the sequel. Also, the for $a,b>0$ the symbolics $a\simeq b$ means that $a$ and $b$ are comparable through uniform constants. 

\begin{lemma}
\label{Lem:3.1}
There exist constants $a_1,a_2,a_3,h_0,N_0,L_0,M_0> 0$ depending only on $S,$ $p$ and $c_1,c_2$ in (\ref{2.1}), such that 
for all $h\in (0,h_0),$ the surface $S$ can be covered by a chart of $N^2$ (or $2N^2$) patches $S_{ij}\subset S \ $ $(1\leq i,j\leq N)$, with $N\simeq 1/h,$ such that the lifted curvilinear cylindrical patches $P_{ij}=\Phi\big(S_{ij}\times\left(-g_1^h(x),g_2^h(x)\right)\big)$
have the properties below.
\begin{itemize}
\item[(P1)] The patches cover the thin domain with the same overlap pattern as the surface grid.
\item[(P2)] For every $1\leq i,j\leq N$ pair one has $a_1h^3 \le \mathcal{L}^3(P_{ij}) \le a_2h^3$.
\item[(P3)] For face neighbors $|i-i'|+|j-j'|=1$ one has $\mathcal{L}^3(P_{ij}\cap P_{i'j'}) \ge a_3h^3$.
\item[(P4)] Every point of $\Omega_h$ lies in at most $N_0$ patches.
\item[(P5)] Every patch $P_{ij}$ is an open connected $(L_0,M_0)-$Lipschitz domain, thus supports Korn's first inequality 
with a uniform constant depending only on $S,p,c_1,c_2$ by Theorem~B and Remark~\ref{Rem:1.1}.
\item[(P6)] One has $\cup_{i,j=1}^NP_{ij}=\Omega_h$.
\end{itemize}
\end{lemma}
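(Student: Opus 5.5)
Following the reduction made just before the lemma, I assume $S$ is covered by a single $C^{1,1}$ chart $\psi\colon \overline{Q}\to S$, where $Q=(0,\ell)^2$ is a square. Since $S$ is compact and $C^{1,1}$, I may take $\psi$ bi-Lipschitz with constants $\Lambda^{\pm1}$ and with $\nabla\psi$ Lipschitz; all these constants depend only on $S$. If the natural parameter domain is not a square, I split it into two triangles or squares, which is where the count $2N^2$ comes from.

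\textbf{Step 1: the grid.} Fix $N=\lceil \ell/h\rceil$, so $N\simeq 1/h$, and set $s=\ell/N\simeq h$. Let $Q_{ij}=\big((i-1)s-s/2,\,is+s/2\big)\times\big((j-1)s-s/2,\,js+s/2\big)\cap Q$. These are squares of side $2s$ (truncated at $\partial Q$) that overlap their face neighbours in rectangles of size $s\times 2s$. Put $S_{ij}=\psi(Q_{ij})$. Properties (P1), (P4) with $N_0=9$, and (P6) then follow directly from the corresponding properties of the planar grid, because $\psi$ and $\Phi$ are injective. For $\Phi$ this uses $h_0 c_1<\tau_S$, so that $\Phi$ is injective on $\{|t|<\tau_S\}$ by the definition of reach.

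\textbf{Step 2: volumes.} In coordinates $(y,t)$ the Jacobian of $(y,t)\mapsto \Phi(\psi(y),t)$ equals $\sqrt{\det g(y)}\,\prod_k(1-t\kappa_k)$. This lies between two positive constants once $c_1h_0\kappa_0\le 1/2$. Since the thickness is comparable to $h$ by (\ref{2.1}), integration gives $\mathcal L^3(P_{ij})\simeq s^2 h\simeq h^3$, which is (P2). The same computation on the overlap rectangle of area $\simeq h^2$ gives (P3).

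\textbf{Step 3: uniform Lipschitz character (P5).} This is the main obstacle. I rescale by $1/h$, i.e. consider $h^{-1}(P_{ij}-x_{ij})$, using that the Korn constant in Theorem A is scale invariant (Remark~\ref{Rem:1.1}). In the rescaled coordinates $Y=(y-y_{ij})/h$ and $T=t/h$, the map $F(Y,T)=h^{-1}\big(\Phi(\psi(y_{ij}+hY),hT)-\Phi(\psi(y_{ij}),0)\big)$ is defined on a region $\{Y\in \tilde Q_{ij},\ -\tilde g_1(Y)<T<\tilde g_2(Y)\}$. Here $\tilde Q_{ij}$ is a square of side $\simeq1$ (or its intersection with a corner of $Q$), and $\tilde g_k(Y)=g_k^h(\psi(y_{ij}+hY))/h$ lies in $[1,c_1]$ with Lipschitz constant at most $c_2\Lambda h$ by (\ref{2.1}). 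This reference region is therefore a Lipschitz "box with graph top and bottom", with $(L,R)$ constants depending only on $c_1$, $c_2$ and $\Lambda$.

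Next, $\nabla F(Y,T)=(\nabla\psi,\,n)+O(h)$ uniformly, using the $C^{1,1}$ bounds on $\psi$ and $n$. Thus $F$ is a $C^{1}$-small perturbation of the fixed linear isomorphism $(\nabla\psi(y_{ij}),n(y_{ij}))$, whose condition number is bounded in terms of $S$. Bi-Lipschitz maps that are close to linear isomorphisms preserve the $(L,R)$-Lipschitz property with controlled constants. To prove this, at each boundary point I would pick the graph direction given by the image of the reference direction and apply the implicit function theorem quantitatively. Corners, where lateral faces meet the top and bottom faces, need a direction transversal to both; such a direction exists uniformly because the reference box meets its faces at angles bounded below.

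Near $\partial S$ the lateral face of $\Omega_h$ is handled in the same way, after choosing $\psi$ so that $\partial S$ corresponds to a side of $Q$. Hence every $P_{ij}$ is $(L_0,M_0)$-Lipschitz, and connectedness is clear. Combining this with Remark~\ref{Rem:1.1}, the constant in Theorem A is uniform over all patches. The delicate points I would double-check are the uniform transversality at corners and the truncated boundary patches.
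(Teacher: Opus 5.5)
Your proposal is correct and follows the paper's construction. Both use a grid of enlarged, overlapping squares in the parameter domain lifted by $\Phi$, volume bounds from the Jacobian $\sqrt{\det g}\,(1-t\kappa_1)(1-t\kappa_2)$, and the near-flatness of each patch at scale $h$ for the uniform Lipschitz property; your enlargement factor $2$ and $N_0=9$, in place of the paper's $4/3$ and $N_0=4$, make no difference.

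The only real difference is in (P5). You prove it directly by rescaling and treating $F$ as a small perturbation of the fixed linear map $(\nabla\psi(y_{ij}),\,n)$, whereas the paper just invokes the proof of Lemma~6.1 in [Har.Rod.1]. One caveat: since $n$ is only Lipschitz, $F$ is not $C^1$, so your claim that $F$ is $C^1$-close to the linear map is not right as stated. What holds is that the difference is small in Lipschitz norm, so the graph-direction step needs a Lipschitz inverse-function or contraction argument rather than the classical implicit function theorem.
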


\begin{proof}[Proof of Lemma~\ref{Lem:3.1}]
 
Assume without loss of generality that the coordinate patch for $S$ is the box $Q,$ which is the unit cube $Q_0=[0,1]^2$ for internal patches and half of it for boundary patches. Choose the positive integer $N\simeq 1/h$ and divide $Q$ into a grid of $N^2$ equal squares for internal patches and $2N^2$ equal squares for boundary patches, then increase each cube by a factor of $4/3,$ and take the truncated cover of $Q.$ For initially chosen small $\sigma_0=\sigma_0(S)>0$ and for small $h,$ for each small square 
$Q_{ij}$, there exists a $C^2$ diffeomorphism  $\psi: Q_{ij} \to \mathbb{R}^3$ onto its image with $\|D\psi - \BI\| \le \sigma_0$ up to an isometry. This means geometrically, that the image $S_{ij}=\psi(Q_{ij})$ is almost a flat square. Next, the mapping $\Phi(x,t) = x + tn(x)$ is injective on $S\times(-\tau_S,\tau_S).$ It has been shown in the proof of Theorem~2.4 (ii) in [\ref{bib:Har.Rod.1}], that there exists $\tilde h_0=\tilde h_0(S,p)>0$ such that for $h\in (0,\tilde h_0)$, $\Phi$ is bi-Lipschitz from $S\times(-h,h)$ onto $\Omega_h$ with numerical Lipschitz constants. A straightforward calculation gives the Jacobian $J\Phi(x,t) = |(1-t\kappa_1)(1-t\kappa_2)|$, where 
$\kappa_i$ are the principal curvatures of $S.$ Hence, for measurable $E\subset S$ and for $h<1/(2\kappa_0),$ one has
\begin{equation}
\label{3.1}
\frac{1}{4}h \mathcal{H}^2(E) \le \mathcal{L}^3\big(\Phi(E\times(-h,h))\big)
\le \frac{9}{4} h \mathcal{H}^2(E).
\end{equation}
Now, (P1)-(P4) follow from the cover construction, the mentioned properties of $\Phi$ and $\psi,$ and (\ref{3.1}). In this particular case when $S$ is a single patch, one can take $N_0=4,$ and in general case one has to take $N_0=4N_1,$ where $N_1$ is the number of the patches in the atlas of $S.$ The uniform $(L_0,R_0)-$Lipschitz property of $P_{ij}$ appears in the proof of Lemma~6.1 
in [\ref{bib:Har.Rod.1}], needing only slight modifications. Thus $P_{ij}$ supports Korn's inequality with a uniform $C(S,p)$ constant 
by Theorem~A and Remark~\ref{Rem:1.1} due to the fact that the inequality is scale-invariant. This completes the proof of the lemma.
\end{proof}
We are now ready to prove Theorem~\ref{Thm:2.1}

\begin{proof}[Proof of (\ref{2.3}) in Theorem~\ref{Thm:2.1}]
Let $h_0$ be as in Lemma~\ref{Lem:3.1} and fix $h< h_0$ and $u \in GSBD^p(\Omega_h)$ with $\mathcal{H}^2(J_u)<\delta h$,  
where $0<\delta\leq\delta_0$ and $\delta_0$ will be defined later in the proof. The constants $C,C_i>0$ in the proof may depend 
only on $S,p,n,c_1,c_2$ unless otherwise specified. The proof proceeds in six steps.\\

\noindent\textbf{Step 1: Local Korn and the volume bound.}
By (P5) and Theorem~A, for each $(i,j)$ there are a set $\omega_{ij}\subset P_{ij}$ of finite perimeter and a skew-symmetric
matrix $\BA_{ij}$ with
\begin{equation}
\label{3.2}
\mathcal{H}^2(\partial^*\omega_{ij}) \le C_K \mathcal{H}^2(J_u\cap P_{ij}),
\qquad \int_{P_{ij}\setminus\omega_{ij}}|\nabla u - \BA_{ij}|^p \le C_K\int_{P_{ij}}|e(u)|^p,
\end{equation}
where $C_K=C_K(S,p,n,c_1,c_2)>0$ is the constant in Theorem~A. We have by Lemma~\ref{Lem:3.1} (P2) the bound 
$\mathcal{L}^3(\omega_{ij})^{1/3} \le (a_2h^3)^{1/3}$, thus we get by the isoperimetric inequality, that
\begin{equation}
\label{3.3}
\mathcal{L}^3(\omega_{ij})
= \mathcal{L}^3(\omega_{ij})^{2/3}\,\mathcal{L}^3(\omega_{ij})^{1/3}
\le C_1 h\,\mathcal{H}^2(J_u\cap P_{ij}).
\end{equation}
The factor $h$ in (\ref{3.3}) is essential, as it will make the so-called \textit{bad-patch}
count scale as $h^{-2}$ and will make $\mathcal{L}^3(\omega)\le C\mathcal{H}^2(J_u)$ later in Step~4.\\

\noindent\textbf{Step 2: Good and bad patches.}
With the constant $C_1$ in (\ref{3.3}), $a_3$ in (P3) and $\lambda= a_3/4C_1$, we call a patch $P_{ij}$ \textit{good} if 
$$
\mathcal{H}^2(J_u\cap P_{ij}) \le \lambda h^2
$$ 
and \textit{bad} otherwise. Denote the indexing sets of  
\textit{good} and \textit{bad} patches as
\begin{equation}
\label{3.4}
\mathcal{G} := \{(i,j): \mathcal{H}^2(J_u\cap P_{ij}) \le \lambda h^2\}, \qquad \mathcal{B} := \mathcal{G}^c.
\end{equation}
For good patches (\ref{3.3}) gives
\begin{equation}
\label{3.5}
\mathcal{L}^3(\omega_{ij}) \le C_1h\cdot\lambda h^2 = \frac{a_3}4 h^3.
\end{equation}
By (P4) of the lemma we have
$$
\sum_{i,j}\mathcal{H}^2(J_u\cap P_{ij}) \le N_0 \mathcal{H}^2(J_u),
$$ 
and each bad patch contributes at least $\lambda h^2$ crack area, thus we get 
\begin{equation}
\label{3.6}
\#(\mathcal{B}) \le \frac{N_0 \mathcal{H}^2(J_u)}{\lambda h^2},
\end{equation}
and using $\mathcal{H}^2(J_u)<\delta h$ we get 
\begin{equation}
\label{3.7}
\#(\mathcal{B}) \le C_2 \delta N.
\end{equation}
These bounds serve different purposes. The first controls the measure of $\omega$, the second controls the combinatorics. 
This will be important in the combinatorial observations in the later steps.\\

\noindent\textbf{Step 3: Comparing skew--symmetric matrices across a \textit{good} face--overlap.}
Let $(i,j)$, $(i',j')$ be face neighbors, both \textit{good}. For
$$
V= (P_{ij}\cap P_{i'j'})\setminus(\omega_{ij}\cup\omega_{i'j'}),
$$
we have by (P3) and (\ref{3.5}), that
$$
\mathcal{L}^3(V) \ge a_3h^3 - 2\cdot\frac{a_3}4h^3=\frac{a_3}2h^3
$$
Thus, since $\BA_{ij}-\BA_{i'j'}$ is constant, we get
\begin{align}
\label{3.8}
\frac{a_3}2h^3\,|\BA_{ij}-\BA_{i'j'}|^p &\le \int_{V}|\BA_{ij}-\BA_{i'j'}|^p\\ \nonumber
& \le 2^{p-1}\!\!\int_{V}\!\big(|\nabla u - \BA_{ij}|^p + |\nabla u - \BA_{i'j'}|^p\big)\\ \nonumber
& \le C\!\!\int_{P_{ij}\cup P_{i'j'}}\!\!|e(u)|^p,
\end{align}
Hence for any other patch $P_{kl}$ (volume $\le a_2h^3$ by (P2)), we have by (\ref{3.8}):
\begin{align}
\label{3.9}
\|\BA_{ij}-\BA_{i'j'}\|^p_{L^p(P_{kl})} & \le a_2h^3\,|\BA_{ij}-\BA_{i'j'}|^p \\ \nonumber
& \le C\Big(\|e(u)\|^p_{L^p(P_{ij})} +\|e(u)\|^p_{L^p(P_{i'j'})}\Big).
\end{align}

\noindent\textbf{Step 4: Bound on the excised (bad) set.}
Now we call the patch row $P_{i,-}= \bigcup_j P_{ij}$ good, if all its patches are good, and bad  otherwise. Same for the
 columns $P_{-,j}.$  Let $\mathcal{B}^r, \mathcal{B}^c$ be the bad row and column indexing sets.  Obviously we have 
 $\#(\mathcal{B}^r),\#(\mathcal{B}^c) \le \#(\mathcal{B})$. Define
\begin{equation}
\label{3.10}
\omega := \Big(\bigcup_{i\in\mathcal{B}^r,\,j\in\mathcal{B}^c}P_{ij}\Big)
\cup \Big(\bigcup_{ij}\omega_{ij}\Big).
\end{equation}
Every bad patch lies in a bad row and a bad column, hence in $\omega$. By (P2), (\ref{3.3}), (\ref{3.6}) and (\ref{3.7}) 
we can estimate
\begin{align}
\label{3.11}
\mathcal{L}^3(\omega) &\le a_2h^3\cdot\frac{N_0\mathcal{H}^2(J_u)}{\lambda h^2}\cdot
C_2\delta N + C_1h N_0 \mathcal{H}^2(J_u) \\ \nonumber
& \le C(\delta+h) \mathcal{H}^2(J_u).
\end{align}

\noindent\textbf{Step 5: The \textit{good} path system.}
We enumerate good columns $g_1<\dots<g_G$ and bad columns $b_1<\dots<b_B$ where we set 
\begin{equation}
\label{3.12}
G = N-\#(\mathcal{B}^c), \qquad B = \#(\mathcal{B}^c) \le C_2\delta N.
\end{equation}
Fix now $\delta_0$ with $C_2\delta_0 \le \frac13$. Then $B < G$ by (\ref{3.12}) and assign to each bad column $b_l$ its own good column $g_l$ for $l=1,\dots,B$. Fix a good row $i\notin\mathcal{B}^r$ and an anchor $\BA_{i1}$ ($P_{i1}$ is good since its row is). Patches lying in (bad row $\cap$ bad column) sit inside $\omega$, so we can estimate
\begin{equation}
\label{3.13}
\int_{\Omega_h\setminus\omega} |\nabla u -\BA_{i1}|^p
\le \sum_{l=1}^G\sum_{k=1}^{N} \|\nabla u - \BA_{i1}\|^p_{L^p(P_{kg_l}\setminus\omega_{kg_l})}
+ \sum_{l=1}^B\sum_{k\notin\mathcal{B}^r}
\|\nabla u - \BA_{i1}\|^p_{L^p(P_{kb_l}\setminus\omega_{kb_l})}.
\end{equation}
We divide the good patches into two types.\\

\noindent\textbf{Type (a) target patch $P_{kg_l}$ in a good column.} These are the patches that are reachable from $P_{i1}$ through 
the L-shaped path
$$
P_{i1}\to\cdots\to P_{ig_l}\to\cdots\to P_{kg_l},
$$
that runs along good row $i$, then along good column $g_l$. All patches are good, all steps are face-adjacent, and the length 
of the path is at most $2N$.\\

\noindent\textbf{Type (b) target $P_{kb_l}$ in a bad column but good row $k.$} Those are the patches that are reachable 
from $P_{i1}$ through the Z-shaped path
$$
P_{i1}\to\cdots\to P_{ig_l}\to\cdots\to P_{kg_l}\to\cdots\to P_{kb_l},
$$
that runs along good row $i$, good column $g_l$, then good row $k$. The final leg crosses bad columns but stays in the good 
row $k$, so every patch on it is good and the length is at most $3N$. The injective assignment $b_l\mapsto g_l$ keeps the 
accumulation additive. The two path types are illustrated in Figure~1.

\begin{figure}[!h]
\centering
\begin{tikzpicture}[scale=0.55]
 panel (a)
\begin{scope}
\foreach \r/\c in {6/2, 7/3, 6/7, 6/8, 7/7, 7/8, 2/2, 2/3, 3/2, 3/3,
                   3/8, 2/7}
  \fill[black!25] (\c-1,\r-1) rectangle (\c,\r);
\foreach \r/\c in {7/2, 6/3, 3/7, 2/8}
  \fill[black] (\c-1,\r-1) rectangle (\c,\r);
\draw[step=1, black!60, thin] (0,0) grid (10,10);
\draw[black, thick] (0,0) rectangle (10,10);
\draw[very thick, rounded corners=2pt, -stealth]
  (0.5,0.5) -- (4.5,0.5) -- (4.5,8.62);
\fill (0.5,0.5) circle (5pt and 5pt);
\node[below left, inner sep=1pt] at (1.1,0.12) {$P_{i1}$};
\node[above, inner sep=2pt] at (4.5,9.05) {$P_{kg_l}$};
\node[below] at (5,-0.9) {(a)};
\end{scope}
 panel (b)
\begin{scope}[xshift=13.5cm]
\foreach \r/\c in {6/2, 7/3, 6/7, 6/8, 7/7, 7/8, 2/2, 2/3, 3/2, 3/3,
                   3/8, 2/7}
  \fill[black!25] (\c-1,\r-1) rectangle (\c,\r);
\foreach \r/\c in {7/2, 6/3, 3/7, 2/8}
  \fill[black] (\c-1,\r-1) rectangle (\c,\r);
\draw[step=1, black!60, thin] (0,0) grid (10,10);
\draw[black, thick] (0,0) rectangle (10,10);
\draw[very thick, rounded corners=2pt, -stealth]
  (0.5,0.5) -- (4.5,0.5) -- (4.5,7.5) -- (2.88,7.5);
\fill (0.5,0.5) circle (5pt and 5pt);
\node[below left, inner sep=1pt] at (1.1,0.12) {$P_{i1}$};
\node[above, inner sep=2pt] at (2.5,8.1) {$P_{kb_l}$};
\node[below] at (5,-0.9) {(b)};
\end{scope}
\end{tikzpicture}

\begin{minipage}{.8\textwidth}
\caption{The two path types of Step~5, drawn on the index grid of one chart (patches are shown disjoint for visualization; in the
construction they overlap). Black cells indicate bad patches, and white cells indicate good patches. Gray cells lie in a bad row and
a bad column and they are absorbed into the excised set $\omega$ and no path visit them.
\textbf{(a)}: Target in a good column, L-shaped path along the anchor row $i$ and the good column $g_l$. 
\textbf{(b)}: Target in a bad column but a good row $k$, Z-shaped path entering the target along row $k$. Every cell
visited by either path lies in a good row or a good column and is therefore good.}
\end{minipage}
\label{fig:paths}
\end{figure}

Now, for a path $\pi$ ($|\pi|\le3N$), ending at $P_{kl}$, we have by (\ref{3.2}) and (\ref{3.9}) the bound
\begin{equation}
\label{3.14}
\|\nabla u - \BA_{i1}\|^p_{L^p(P_{kl}\setminus\omega_{kl})} 
\le CN^{p-1}\Big(\|e(u)\|^p_{L^p(P_{i,-})} + \|e(u)\|^p_{L^p(P_{-,g_l})} + \|e(u)\|^p_{L^p(P_{k,-})}\Big),
\end{equation}
with the last term present only for type (b), since then we would have 
$\pi \subset (\text{row } i) \cup (\text{column } g_l) \cup (\text{row } k)$. Finally, we Insert (\ref{3.14}) into (\ref{3.13}). We have at 
most $N$ values of each free index, and bounded overlap of rows and columns, thus we have 
$$
\sum_l\|e(u)\|^p_{L^p(P_{-,g_l})} \le N_0 \|e(u)\|^p_{L^p(\Omega_h)}
$$
and likewise for the rows. Combining all these estimates together yields 
\begin{equation}
\label{3.15}
\int_{\Omega_h\setminus\omega}|\nabla u - \BA_{i1}|^p \le CN^{p+1} \|e(u)\|^p_{L^p(P_{i,-})}
+ CN^p \|e(u)\|^p_{L^p(\Omega_h)}.
\end{equation}

\noindent\textbf{Step 6: Averaging over the anchor row}
The estimate (\ref{3.15}) is not quite what we would like to obtain, due to the additional factor $N$ within the first summand on the
 right. With the anchor fixed, all of order $N^2$ targets route their first step through the $O(1)$ patches adjacent to $P_{i1}$, so 
 some patches generate accumulation of order $N^2$, and $N^{p-1}\cdot N^2$ is the true size of the right side of (\ref{3.15}). The remedy is that the anchor is free and we can sum (\ref{3.15}) over the good rows $i\notin\mathcal{B}^r$, of which there are between 
 $N/2$ and $N$ and use the estimate 
$$
\sum_{i\notin\mathcal{B}^r}\|e(u)\|^p_{L^p(P_{i,-})} \le N_0 \|e(u)\|^p_{L^p(\Omega_h)}.
$$
Consequently, we have the bound
$$
\sum_{i\notin\mathcal{B}^r}\int_{\Omega_h\setminus\omega}
|\nabla u - \BA_{i1}|^p \le CN^{p+1}\|e(u)\|^p_{L^p(\Omega_h)}, 
$$
thus if we choose the index $i\notin\mathcal{B}^r$ so that the integral $\int_{\Omega_h\setminus\omega} |\nabla u - \BA_{i1}|^p$
is the minimal among all of $i\notin\mathcal{B}^r$, we get for such $i$ the desired bound:
\begin{equation}
\label{3.16}
\int_{\Omega_h\setminus\omega}\big|\nabla u - \BA \big|^p \le C N^p \|e(u)\|^p_{L^p(\Omega_h)}
= \frac{C}{h^p} \|e(u)\|^p_{L^p(\Omega_h)},
\end{equation}
with $\BA := \BA_{i1}.$ This completes the proof of (\ref{2.3}).

\end{proof}

\begin{proof}[Proof of (\ref{2.4}) in Theorem~\ref{Thm:2.1}]

Theorem A provides on each patch $P_{ij}$, together with the skew-symmetric matrix $\BA_{ij},$ a vector $b_{ij}\in\mathbb R^3,$ i.e., an infinitesimal rigid motion $a_{ij}(x) = \BA_{ij}x + b_{ij}$ with
$$
\|u - a_{ij}\|^q_{L^q(P_{ij}\setminus\omega_{ij})}\le c (\mathrm{diam} P_{ij}) \|e(u)\|^p_{L^p(P_{ij})}.
$$
The chaining in the proof of (\ref{2.4}) runs in the exact same lines with affine maps in place of matrices. 
The comparison of $a_{ij}$ and $a_{i'j'}$ on a good face-overlap will control $|b_{ij}-b_{i'j'}|$ by the same right-hand side. 
Telescoping, anchor averaging, and chart gluing are unchanged. \\

\noindent\textbf{Step 7: Sharpness of the scaling $h^{-p}.$}
It is a well-know fact, that for plates and Sobolev fields, a Kirchhoff bending Ansatz $u = (-x_3\partial_1w, -x_3\partial_2w, w)$ 
compactly supported on a small ball contained in the flat region, gives $h^{-p}$ for the classical case and thus for the case of $GSBD^p$ fields too, in all of the inequalities (\ref{2.3}), (\ref{2.4}) and (\ref{2.7}). This completes the proof of Theorem~\ref{Thm:2.1}.

\end{proof}


\subsection{Proof of Theorem~\ref{Thm:2.2}}
\label{Sub:3.2}

\begin{proof}[Proof of Theorem~\ref{Thm:2.2}.]

Throughout this section it will be assumed that $\partial S \ne \emptyset$. The strategy will be to enlarge the surface, and hence the thin domain through the thin face, then extend the filed $u$ as zero on the extended collar, given the initial zero Dirichlet boundary conditions. We will then apply Theorem~\ref{Thm:2.1} to the extended field on the extended domain, and using the fact that the field is zero on the extended collar, we will be able to bound the norm of the matrix $\BA$ by the elastic energy. We divided the proof into three steps.\\

\noindent\textbf{Step 1: The collar extension.}
Since $\partial S$ is a finite disjoint union of embedded $C^{1,1}$ curves and $S$ is a compact $C^{1,1}$ surface, there are a compact connected $C^{1,1}$ surface $\widetilde S \supset S$ with the same properties as $S,$ and a constant $r_1>0$, both depending 
only on $S$, such that
\begin{equation}
\label{3.17}
\mathcal{C} := \widetilde S\setminus S
\quad\text{contains the collar}\quad
\{x \in \widetilde S:\ 0 < \operatorname{dist}_{\widetilde S}(x,\partial \widetilde S) < r_1\},
\end{equation}
and such that the geometric quantities of $\widetilde S$ are controlled by those of $S$ with numerical constants. Indeed, in the boundary charts of Section~\ref{Sec:3} the surface is the graph of a $C^{1,1}$ function over a half-rectangle with a straight side mapping to $\partial S$. Extending each such graph function to the full rectangle by a $C^{1,1}$ Whitney extension by [\ref{bib:Fefferman}, Problem 1]) with the corresponding norm bounds, and glueing the finitely many extensions by a partition of unity subordinate to the boundary charts, produces $\widetilde S$ with (\ref{3.17}) with comparable to $S$ geometric properties. In the same way we extend the thickness functions $g_i^h$, without relabeling them, in the Lipschitz space according to [\ref{bib:Fefferman}, Problem 1]. The unit normal of $\widetilde S$ restricts to $n$ on $S$, and we keep the notation $n$ for $\widetilde S$ too. Denote the extended thin domain and the collar:
$$
\widetilde\Omega_h := \left\{x + t\,n(x)\ : \ x \in \widetilde S,\ t\in (-g_1^h(x), g_2^h(x))\right\},
\qquad
\mathcal{C}_h := \widetilde\Omega_h\setminus\overline{\Omega}_h ,
$$
so that $\mathcal{C}_h$ is the thickened collar. An application of (\ref{3.1}) on 
$\widetilde S$ gives
\begin{equation}
\label{3.18}
\mathcal{L}^3(\mathcal{C}_h)  \ge  \tfrac12\,h\,\mathcal{H}^2(\mathcal{C}) =c_0h ,
\qquad
\mathcal{L}^3(\widetilde\Omega_h) \le C_0 h ,
\end{equation}
with $C_0,c_0>0$ depending only on $S$. By construction, the surface $\widetilde S$ supports Theorem~\ref{Thm:2.1}, 
with constants depending only on $S$ and the other unchanged parameters. Fix $\widetilde h_0, \widetilde\delta_0, \widetilde C$ 
as in Theorem~\ref{Thm:2.1} for $\widetilde S$ accordingly.\\

\noindent\textbf{Step 2: Extending $u$ to the collar.}
Let $u \in V_0$ with $\mathcal{H}^2(J_u) < \widetilde \delta_0 h$, and define
\begin{equation}
\label{3.19}
\tilde u := \begin{cases} u & \  \text{in } \ \Omega_h,\\[2pt] 
0 &  \ \text{in } \ \mathcal{C}_h. \end{cases}
\end{equation}
It is straightforward to show, that the extension satisfies the following properties:
\begin{equation}
\label{3.20}
\tilde u \in GSBD^p(\widetilde\Omega_h), \qquad e(\tilde u) = e(u)\chi_{\Omega_h}\quad \mathcal{L}^3\text{-a.e.},
\qquad \mathcal{H}^2\big(J_{\tilde u}\setminus J_u\big) = 0,
\end{equation}
and
\begin{equation}
\label{3.21}
\mathcal{H}^2(J_{\tilde u}) \le \mathcal{H}^2(J_u) < \widetilde \delta_0 h \qquad 
\|e(\tilde u)\|_{L^p(\widetilde\Omega_h)} = \|e(u)\|_{L^p(\Omega_h)}.
\end{equation}
Indeed, the interface between the two regions in (\ref{3.19}) is exactly the lateral face $\partial_S\Omega_h$. 
Both $\Omega_h$ and $\mathcal{C}_h$ are Lipschitz with constants depending only on $S$. This is proven in 
[\ref{bib:Har.Rod.1}, Lemma 4.2]. For fields of bounded deformation, because $\widetilde u=0$ in  $\mathcal{C}_h$, 
glueing across a Lipschitz interface gives
$$
E\tilde u \;=\; Eu\,\llcorner\,\Omega_h \;+\; \big(\operatorname{tr}u \odot \nu_S\big)\,
\mathcal{H}^2\llcorner\partial_S\Omega_h ,
$$
where $\nu_S$ is the outer normal of $\Omega_h$ along the lateral face and
$\operatorname{tr}u$ the inner trace; the outer trace vanishes because $\tilde u \equiv 0$ on
$\mathcal{C}_h$. Since $u \in V_0$ we have $\operatorname{tr}u = 0$
$\mathcal{H}^2$-a.e.\ on $\partial_S\Omega_h$, so the interface term vanishes
identically. Hence no jump is created along $\partial_S\Omega_h$, the diffuse part of
$E\tilde u$ is that of $Eu$ extended by zero, and the Cantor part remains zero. This
gives (\ref{3.20})-(\ref{3.21}).\\

\noindent\textbf{Step 3: Estimate on the collar.}
Assume now $\delta \le \widetilde\delta_0$ and $h < \widetilde h_0$, so that Theorem~\ref{Thm:2.1} applies 
to $\tilde u$ on $\widetilde\Omega_h$. Let $\widetilde\omega \subset \widetilde\Omega_h$, $\BA$ and $b$ be the resulting objects,
so that
\begin{equation}
\label{3.22}
\mathcal{L}^3(\widetilde\omega) \le C \mathcal{H}^2(J_{\tilde u}) \le C \mathcal{H}^2(J_u),
\end{equation}
and by (\ref{2.3}) and (\ref{3.20}),
\begin{equation}
\label{3.23}
\|\nabla \tilde u - \BA\|_{L^p(\widetilde\Omega_h\setminus\widetilde\omega)} + 
\|\tilde u - \BA x - b\|_{L^q(\widetilde\Omega_h\setminus\widetilde\omega)}
 \le \frac{C}{h}\|e(u)\|_{L^p(\Omega_h)}.
\end{equation}
Set $\omega := \widetilde\omega\cap\Omega_h$. Then we have by \eqref{3.22}, that  
$$
\mathcal{L}^3(\omega) \leq \mathcal{L}^3(\widetilde\omega)\leq C\mathcal{H}^2(J_u),
$$
which is the measure bound claimed in Theorem~\ref{Thm:2.2}. We now use the estimate (\ref{3.23}) in the collar to bound the norm 
of $\BA x+ b.$ Using $\mathcal{H}^2(J_u) < \widetilde\delta_0h$ in (\ref{3.21}) together with (\ref{3.18}), and choosing 
$\widetilde\delta_0$ small enough that $C\widetilde \delta_0 \le \frac{1}{2}c_0$ we can estimate
\begin{equation}
\label{3.24}
\mathcal{L}^3(\mathcal{C}_h\setminus\widetilde\omega) \ge c_0h - C\widetilde\delta_0 h  \ge \tfrac12 c_0 h.
\end{equation}
On $\mathcal{C}_h$ one has $\nabla\tilde u = 0$ and $\tilde u = 0$ pointwise, so restricting  (\ref{3.23}) to 
$\mathcal{C}_h\setminus\widetilde\omega$ and using (\ref{3.24}), we obtain
\begin{align*}
\frac{1}{2}c_0 h |\BA|^p & \le \int_{\mathcal{C}_h\setminus\widetilde\omega}|\nabla\tilde u - \BA|^p\\
& \le \frac{C}{h^p}\|e(u)\|^p_{L^p(\Omega_h)} ,
\end{align*}
which reduces to
\begin{equation}
\label{3.25}
|\BA| \le \frac{C}{h^{(p+1)/p}} \|e(u)\|_{L^p(\GO_h)} .
\end{equation}
In the same way, since $|\BA x| \le C|\BA|$ on the bounded set $\widetilde\Omega_h$ we have
\begin{align*}
\frac{1}{2} c_0 h |b|^q & \le C\int_{\mathcal{C}_h\setminus\widetilde\omega}
\big(|\tilde u - \BA x - b|^q + |\BA x|^q\big) \\ \nonumber
& \le \frac{C}{h^{q/p}}\|e(u)\|^{q/p} + C h |\BA|^q ,
\end{align*}
hence, by (\ref{3.25}) we discover,
\begin{equation}
\label{3.26}
|b|  \le \frac{C}{h^{(p+1)/p}}\|e(u)\|_{L^p(\GO_h)} .
\end{equation}
Finally, combining (\ref{3.23}),  (\ref{3.25}), and (\ref{3.26}) we get (\ref{2.7}) by the triangle inequality. 
This completes the proof of Theorem~\ref{Thm:2.2}. 

\end{proof}


\section*{Acknowledgements}
This material is supported by the National Science Foundation under Grants No. DMS-2206239.

\section*{AI disclosure} We used Claude Fable 5.0 for reference search, and editing while writing the paper to improve the presentation
of the manuscript.


\end{document}